\DeclareMathOperator{\Supp}{Supp}
\newcommand{\Spec}{\operatorname{Spec}}
\newcommand{\Hom}{\operatorname{Hom{}}}
\renewcommand{\phi}{\varphi}
\renewcommand{\to}{{\longrightarrow}}
\newcommand{\Ann}{\operatorname{Ann}}
\newtheorem{thm}{Theorem}[section]
\newtheorem{prop}[thm]{Proposition}
\newtheorem{lemma}[thm]{Lemma}
\newtheorem{example}[thm]{Example}
\numberwithin{equation}{section}
\begin{document}
\title{A change of rings result for Matlis reflexivity}
\author{Douglas J. Dailey and Thomas Marley}
\address{Department of
Mathematics\\
University of Nebraska-Lincoln\\Lincoln,  NE 68588-0130}
\email{ ddailey2@math.unl.edu}
\address{Department of
Mathematics\\
University of Nebraska-Lincoln\\Lincoln,  NE 68588-0130}
\email{ tmarley1@unl.edu}

\subjclass[2010]{ Primary
13C05; Secondary 13C13}
\keywords{Matlis reflexive, minimal injective cogenerator}

\begin{abstract} Let $R$ be a commutative Noetherian ring and $E$ the minimal injective cogenerator of the category of $R$-modules.   An $R$-module $M$ is (Matlis) reflexive if the natural evaluation map $M\to \Hom_R(\Hom_R(M,E),E)$ is an isomorphism.  We prove that if $S$ is a multiplicatively closed subset of $R$ and $M$ is a reflexive $R$-module, then $M$ is a reflexive $R_S$-module.  The converse holds when $S$ is the complement of the union of finitely many nonminimal primes of $R$, but fails in general.

\end{abstract}

\date{October 14, 2015}

\bibliographystyle{amsplain}

\thanks{The first author was partially supported by U.S. Department of Education grant P00A120068 (GAANN)}

\maketitle

\begin{section}{Introduction}
\end{section}

Let $R$ be a commutative Noetherian ring and $E$ the minimal injective cogenerator of the category of $R$-modules; i.e., $E=\bigoplus_{m\in \Lambda} E_R(R/m)$, where $\Lambda$ denotes the set of maximal ideals of $R$ and $E_R(-)$ denotes the injective hull.    An  $R$-module $M$ is said to be {\it (Matlis) reflexive}  if the natural evaluation map $M\to \Hom_R(\Hom_R(M,E), E)$ is an isomorphism.   In \cite{BER},  the authors assert the following ``change of rings''
principal for Matlis reflexivity (\cite[Lemma 2]{BER}): {\it Let $S$ be a multiplicatively closed subset of $R$ and suppose $M$ is an $R_S$-module.  Then $M$ is reflexive as an $R$-module if and only if $M$ is reflexive as an $R_S$-module.}   However,  the proof given in \cite{BER} is incorrect (see Examples 3.1-3.3)  and in fact the ``if" part is false in general (cf. Proposition \ref{counter}).  In this note, we prove the following:

\medskip

\begin{thm} \label{main} Let $R$ be a Noetherian ring, $S$ a multiplicatively closed subset of $R$, and $M$ an $R_S$-module.
\begin{enumerate}[(a)]
\item If $M$ is reflexive as an $R$-module then $M$ is reflexive as an $R_S$-module.
\item If $S=R\setminus (p_1\cup \ldots \cup p_r)$ where each $p_i$ is a maximal ideal or a nonminimal prime ideal, then the converse to (a) holds.
\end{enumerate}
\end{thm}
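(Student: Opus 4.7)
The plan uses the tensor-hom adjunction for the localization map $R\to R_S$, combined (for part (b)) with the known characterization of Matlis reflexive modules as the mini-max modules $M$ for which $R/\Ann_R(M)$ is complete semilocal.

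For part (a), the base-change adjunction gives, for any $R_S$-module $M$ and any $R$-module $N$,
\[
\Hom_R(M,N)\cong \Hom_{R_S}\bigl(M,\Hom_R(R_S,N)\bigr).
\]
Set $F=\Hom_R(R_S,E)$; since $R_S$ is $R$-flat, $F$ is an injective $R_S$-module, and since $E$ cogenerates $R$-modules, $F$ cogenerates $R_S$-modules. Iterating the adjunction identifies the $R$-bidual of $M$ with the $R_S$-bidual of $M$ with respect to $F$, compatibly with the evaluation maps. So $R$-reflexivity of $M$ is equivalent to $F$-reflexivity of $M$ over $R_S$. To descend $F$-reflexivity to $E'$-reflexivity, write $F=E'\oplus G$, valid because $E'$ (being the minimal injective cogenerator of $R_S$-modules) embeds as an injective summand of every injective cogenerator. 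Setting $A=\Hom_{R_S}(M,E')$ and $B=\Hom_{R_S}(M,G)$ and decomposing
\[
\Hom_{R_S}\bigl(\Hom_{R_S}(M,F),F\bigr)=\Hom_{R_S}(A,E')\oplus\Hom_{R_S}(A,G)\oplus\Hom_{R_S}(B,E')\oplus\Hom_{R_S}(B,G),
\]
one sees that the evaluation $m\mapsto(\phi\mapsto\phi(m))$ has zero off-diagonal components. Hence $F$-reflexivity forces $\Hom_{R_S}(A,G)=\Hom_{R_S}(B,E')=0$ and makes $M\to\Hom_{R_S}(A,E')\oplus\Hom_{R_S}(B,G)$ an isomorphism; projecting onto the first summand shows the $E'$-evaluation is surjective, and it is injective because $E'$ cogenerates. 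So $M$ is Matlis reflexive over $R_S$.

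For part (b), the hypothesis on $S$ makes $R_S$ a semilocal ring whose maximal ideals are exactly $p_1R_S,\dots,p_rR_S$ (by prime avoidance applied to primes of $R$ disjoint from $S$), so $E'=\bigoplus_i E_R(R/p_i)$. Given $M$ is $R_S$-reflexive, the characterization yields that $M$ is mini-max over $R_S$ and that $R_S/\Ann_{R_S}(M)\cong(R/\Ann_R(M))_{\bar S}$ is complete semilocal. The plan is to verify the two analogous conditions over $R$: (i) from an $R_S$-mini-max presentation $0\to L\to M\to N\to 0$, construct an $R$-mini-max one by enlarging $L$ so as to absorb the part of $N$ that fails to be $R$-artinian when a supporting prime $p_i$ is nonmaximal; and (ii) show that $R/\Ann_R(M)$ is itself complete semilocal by passing back from its localization, using that no $p_i$ is a minimal prime of $R$ to prevent the localization from collapsing past a minimal prime---this is exactly the content of the ``nonminimal'' assumption, and the excluded case $S=R\setminus p$ with $p$ minimal is precisely where the counterexamples to the converse promised in the introduction arise. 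The principal obstacle is step (i): constructing an $R$-finitely-generated submodule of $M$ whose $R$-quotient is $R$-artinian.
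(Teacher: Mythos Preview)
Your argument for part (a) is correct and genuinely different from the paper's. The paper reduces via the Belshoff--Enochs--Garc\'{i}a-Rozas characterization to the case where $R$ is complete local with $\Ann_R M=0$, and then uses a local-cohomology computation to force $\dim R\le 1$, so that $R_S$ is Artinian and $M$ is finitely generated over it. Your route---identifying the $R$-bidual with the $F$-bidual over $R_S$ for $F=\Hom_R(R_S,E)$, then splitting $F=E'\oplus G$ (legitimate by Matlis' structure theorem for injectives over the Noetherian ring $R_S$, since a cogenerator must contain each $E_{R_S}(R_S/n)$) and reading off $E'$-reflexivity from the diagonal shape of the evaluation---is more elementary and avoids both the structure theorem and local cohomology. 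It also neatly explains why the original \cite{BER} argument went wrong: $F\neq E'$ in general, but the discrepancy $G$ is harmless for this direction.

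Part (b), however, has a real gap. Your plan treats the mini-max condition (i) and the completeness condition (ii) as separate obstacles, and you admit (i) is unresolved. The paper's insight is that both dissolve simultaneously once one proves that, after reducing to $\Ann_R M=0$ (so $R\hookrightarrow R_S$), the localization map $R\to R_S$ is already \emph{surjective}, hence an isomorphism; then $M$ reflexive over $R_S=R$ is tautologically reflexive over $R$. Proving this surjectivity is the substantive step, and it rests on a result about Henselian local domains: if $(A,\mathfrak m)$ is Henselian and not a field, then every discrete valuation ring with the same fraction field contains $A$. Applied to each complete (hence Henselian) local factor $R_{p_i}$, this forces every $u\in S$ to be a unit in $R$; the hypothesis $p_i\in\operatorname{T}(R)$ is exactly what rules out the degenerate case where $R_{p_i}$ is a field. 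Your outline for (ii) (``pass back from the localization'') and for (i) (``enlarge $L$ to absorb the non-Artinian part of $N$'') does not supply this mechanism, and without it neither step goes through: an $R_S$-Artinian module supported at a nonmaximal $p_i$ is typically not $R$-Artinian and cannot be absorbed into a finitely generated $R$-submodule, and there is no general principle deducing completeness of $R/\Ann_R M$ from completeness of its localization.
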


\begin{section}{Main results}
\end{section}

Throughout this section $R$ will denote a Noetherian ring and $S$ a multiplicatively closed set of $R$.
We let $E_R$  (or just $E$ if the ring is clear) denote the minimal injective cogenerator of the category of $R$-modules as defined in the introduction.    A semilocal ring is said to be complete if it is complete with respect to the $J$-adic topology, where $J$ is the Jacobson radical.

We will make use of the main result of \cite{BER}:

\begin{thm}{\rm(\cite[Theorem 12]{BER})} \label{ber}  Let $R$ be a Noetherian ring, $M$ an $R$-module, and $I=\Ann_RM$.  Then $M$ is reflexive if and only if $R/I$ is a complete semilocal ring and there exists a finitely generated submodule $N$ of $M$ such that $M/N$ is Artinian.
\end{thm}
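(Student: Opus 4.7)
The plan is to prove the two implications separately; in both cases the key is to reduce to classical Matlis duality over a complete semilocal Noetherian ring.

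For the ``if'' direction, assume $R/I$ is complete semilocal and $N\subseteq M$ is finitely generated with $M/N$ Artinian. First I would reduce to the case $I=0$: for an $R$-module killed by $I$, reflexivity over $R$ is equivalent to reflexivity over $R/I$, because $\Hom_R(R/I, E_R)\cong E_{R/I}$ once $R/I$ is semilocal (the summands $E_R(R/\m)$ of $E_R$ indexed by maximals $\m \not\supseteq I$ contribute nothing to $\Hom_R(M,-)$, and the rest assemble into $E_{R/I}$). So assume $R$ is complete semilocal. The extension of classical Matlis duality to this setting says every finitely generated and every Artinian $R$-module is reflexive. Apply the exact functor $(-)^{\vee\vee}=\Hom_R(\Hom_R(-,E),E)$ to $0\to N\to M\to M/N\to 0$ and compare with the original sequence via the natural evaluation maps; the Five Lemma then gives $M\cong M^{\vee\vee}$.

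For the ``only if'' direction, assume $M$ is reflexive and $I=\Ann_R M$. I would proceed in three stages. First, show the set of maximal ideals in $\Supp(M)$ is finite: decompose $M^\vee=\prod_\m \Hom_R(M, E_R(R/\m))$, note that each factor is naturally a $\hat{R_\m}$-module since $\End_R(E_R(R/\m))\cong \hat{R_\m}$, and argue that if infinitely many factors were nonzero then $M^{\vee\vee}$ would acquire an ``oversized'' structure (for example, elements whose annihilators avoid every finite intersection of maximals) inconsistent with $M^{\vee\vee}\cong M$. Second, construct $N$: with the support reduced to finitely many maximals, analyze $M^\vee$, which is itself reflexive and supported at finitely many maximals; descend to its Matlis structure in a complete semilocal setting to exhibit $M^\vee$ as an extension of a finitely generated module by an Artinian one, then dualize back using the already-established ``if'' direction to produce $N\subseteq M$ with $M/N$ Artinian. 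Third, deduce that $R/I$ is complete: the ring $R/I$ embeds into $\End_R(M)\cong \End_R(M^\vee)$, which inherits a complete topological structure from the identification $\End_R(E)\cong \prod_\m \hat{R_\m}$ on the relevant (finitely many) summands; faithfulness of $M$ over $R/I$ then forces equality of $R/I$ with its Jacobson-adic completion.

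The hardest step is the last: converting an abstract ``complete'' endomorphism structure on $M$ into actual algebraic completeness of $R/I$. I expect this will hinge on a careful tracking of how the $R/I$-action on $M$ extends uniquely to an action of $\widehat{R/I}$ and then using that $M$ is faithful over $R/I$ to collapse the two. Step (i) is also nontrivial and may require an induction using prime-avoidance-style arguments on annihilators of elements in $M^{\vee\vee}$.
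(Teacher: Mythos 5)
You should first note that the paper does not prove this statement at all: it is quoted verbatim from \cite[Theorem 12]{BER}, with only a remark that the proof there does not depend on the flawed \cite[Lemma 2]{BER}. So there is no in-paper proof to compare against, and your proposal has to stand on its own. Your ``if'' direction is essentially the standard argument and is sound: reduce to $I=0$ via $\Hom_R(R/I,E_R)\cong E_{R/I}$ (this holds for every ideal $I$; the semilocal hypothesis is not needed for that isomorphism), split the complete semilocal ring into a finite product of complete local rings, invoke classical Matlis duality for finitely generated and for Artinian modules, and run the five lemma on the double-dualized short exact sequence, using that $(-)^{\vee}$ is exact because $E$ is injective.

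The ``only if'' direction is where all the content of the theorem lives, and your sketch has genuine gaps there. (a) The opening decomposition $M^{\vee}=\prod_{\m}\Hom_R(M,E_R(R/\m))$ is false in general: $E_R$ is the direct sum $\bigoplus_{\m}E_R(R/\m)$, and $\Hom_R(M,\bigoplus_{\m}E_R(R/\m))$ agrees with neither the product nor the direct sum of the $\Hom_R(M,E_R(R/\m))$ unless, e.g., $M$ is finitely generated or only finitely many summands are relevant --- which is precisely what you are trying to establish. (b) The finiteness of the set of maximal ideals in $\Supp M$ is asserted via an ``oversized structure'' heuristic; no contradiction is actually derived, and this is one of the genuinely hard points. (c) The construction of $N$ is circular: you propose to exhibit $M^{\vee}$ as an extension of a finitely generated module by an Artinian one by appealing to the structure of reflexive modules over a complete semilocal ring, but that structure statement is exactly the conclusion of the theorem being proved. (d) The completeness of $R/I$ is left at the level of ``I expect this will hinge on\dots''; the passage from a topologically complete endomorphism ring to completeness of $R/I$ in the Jacobson-adic topology is the step that requires real work. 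As written, your argument establishes only the easy half; for the converse you would need to reconstruct the chain of preparatory results in \cite{BER} rather than gesture at them.
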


We remark that the validity of this theorem does not depend on \cite[Lemma 2]{BER}, as the proof of \cite[Theorem 12]{BER} uses this lemma only in a special case where it is easily seen to hold. (See the proof of \cite[Theorem 9]{BER}, which is the only instance \cite[Lemma 2]{BER} is used critically.)

\begin{lemma}{\rm(\cite[Lemma 1]{BER}} \label{lem1} Let $M$ be an $R$-module and $I$ an ideal of $R$ such that $IM=0$.  Then $M$ is reflexive as an $R$-module if and only if $M$ is reflexive as an $R/I$-module.  
\end{lemma}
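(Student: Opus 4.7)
The plan is to compare Matlis duality over $R$ with Matlis duality over $R/I$ by identifying the minimal injective cogenerator of the category of $R/I$-modules inside $E_R$. First, I would establish the natural isomorphism $\Hom_R(R/I,E_R)\cong E_{R/I}$ of $R/I$-modules. This follows from the decomposition $E_R=\bigoplus_{\m\in\Lambda}E_R(R/\m)$, the fact that $\Hom_R(R/I,E_R(R/\m))$ vanishes when $I\not\subseteq\m$ and equals $E_{R/I}(R/(\m/I))$ when $I\subseteq\m$, and the bijection between maximal ideals of $R$ containing $I$ and maximal ideals of $R/I$.

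Next, using the Hom-tensor adjunction together with $M\cong M\otimes_{R/I}R/I$ (which is legitimate because $IM=0$), I obtain
\[
\Hom_R(M,E_R)\;\cong\;\Hom_{R/I}\bigl(M,\,\Hom_R(R/I,E_R)\bigr)\;\cong\;\Hom_{R/I}(M,E_{R/I}).
\]
Since $\Hom_R(M,E_R)$ is annihilated by $I$, I can apply the same computation to it and get
\[
\Hom_R\bigl(\Hom_R(M,E_R),\,E_R\bigr)\;\cong\;\Hom_{R/I}\bigl(\Hom_{R/I}(M,E_{R/I}),\,E_{R/I}\bigr).
\]

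Finally, I would verify that these isomorphisms intertwine the two evaluation maps, i.e., that the square whose top row is the $R$-evaluation map $M\to\Hom_R(\Hom_R(M,E_R),E_R)$ and whose bottom row is the $R/I$-evaluation map $M\to\Hom_{R/I}(\Hom_{R/I}(M,E_{R/I}),E_{R/I})$ commutes with the vertical identifications above. This is a naturality check for the adjunction and should follow by chasing an element $m\in M$ through both paths. Once commutativity is established, one evaluation map is an isomorphism if and only if the other is, which is exactly the claim.

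The only real obstacle is the naturality verification in the last step; the identification of $\Hom_R(R/I,E_R)$ with $E_{R/I}$ is the conceptual input, while the rest is adjunction bookkeeping, so I expect no substantive difficulty beyond being careful with the identifications.
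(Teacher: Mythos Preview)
Your proposal is correct and follows exactly the same approach as the paper: the paper's proof is a single sentence stating that $E_{R/I}=\Hom_R(R/I,E_R)$ and that the result then follows by Hom--tensor adjunction. You have simply spelled out the details of this argument, including the naturality check with the evaluation maps, which the paper leaves implicit.
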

\begin{proof} Since $E_{R/I}=\Hom_R(R/I, E_R)$,  the result follows readily by Hom-tensor adjunction.
\end{proof}

\begin{lemma} \label{prod} Let $R=R_1\times \cdots \times R_k$ be a product of Noetherian local rings.   Let $M=M_1\times\cdots \times M_k$ be an $R$-module.   Then $M$ is reflexive as an $R$-module if and only if $M_i$ is reflexive as an $R_i$-module for all $i$.
\end{lemma}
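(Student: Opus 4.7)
The plan is to decompose everything in sight along the orthogonal idempotents of $R$ and then appeal to Lemma \ref{lem1}. Since $R=R_1\times\cdots\times R_k$, we have orthogonal idempotents $e_1,\ldots,e_k$ with $\sum e_i=1$, and the decomposition $M=M_1\oplus\cdots\oplus M_k$ is exactly the decomposition $M=\bigoplus_i e_iM$. Note that each $M_i$ is annihilated by the ideal $I_i:=(1-e_i)R=\prod_{j\neq i}R_j$, and $R/I_i\cong R_i$.

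First I would handle the ring change. By Lemma \ref{lem1}, for each $i$ the module $M_i$ is reflexive as an $R$-module if and only if it is reflexive as an $R/I_i=R_i$-module. So the lemma reduces to the statement: $M=\bigoplus_i M_i$ is reflexive as an $R$-module if and only if each $M_i$ is reflexive as an $R$-module.

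For this second step, I would use that $\Hom_R(-,E_R)$ is additive and that the indexing set is finite, so
\[
\Hom_R(M,E_R)=\bigoplus_{i=1}^k \Hom_R(M_i,E_R),
\]
and similarly
\[
\Hom_R\!\bigl(\Hom_R(M,E_R),E_R\bigr)=\bigoplus_{i=1}^k \Hom_R\!\bigl(\Hom_R(M_i,E_R),E_R\bigr),
\]
where in the second identification we use that $\Hom_R(\bigoplus_{i=1}^k A_i, E_R)=\prod_{i=1}^k\Hom_R(A_i,E_R)$ together with the fact that a finite product equals a finite direct sum. Under these identifications the natural biduality map for $M$ becomes the direct sum of the biduality maps for the $M_i$, so it is an isomorphism if and only if each component is.

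There is no real obstacle here; the only point to be careful about is the finiteness of the index set $k$, which is what lets us identify the product $\prod_i\Hom_R(\Hom_R(M_i,E_R),E_R)$ with the direct sum and therefore recognize the double-dual of $M$ as $\bigoplus_i M_i^{\vee\vee}$. Combining the two steps gives the stated equivalence.
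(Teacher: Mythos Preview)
Your argument is correct and follows essentially the same route as the paper: decompose the double dual along the product structure and observe that the evaluation map splits as the direct sum of the componentwise evaluation maps. The only organizational difference is that the paper handles the change of rings from $R$ to $R_i$ by explicitly identifying $E_R\cong E_1\oplus\cdots\oplus E_k$ with $E_i=E_R(R/m_i)\cong E_{R_i}(R_i/n_i)$, whereas you package this step into an appeal to Lemma~\ref{lem1}; both amount to the same computation.
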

\begin{proof} Let $\rho_i:R\to R_i$ be the canonical projections for $i=1,\dots,k$.  Let $n_i$ be the maximal ideal of $R_i$ and $m_i=\rho_i^{-1}(n_i)$ the corresponding maximal ideal of $R$.  Then $m_iR_i=n_i$ and $m_iR_j=R_j$ for all $j\neq i$.  Note that $R_{m_i}\cong R_i$ and  $E_i:=E_R(R/m_i)\cong E_{R_i}(R_i/n_i)$ for all $i$.  Then $E_R=E_1\oplus \cdots \oplus E_k$.  It is easily seen that 
$$\Hom_R(\Hom_R(M,E_R),E_R)\cong \bigoplus_{i=1}^k\Hom_{R_i}(\Hom_{R_i}(M_i, E_i), E_i),$$
and that this isomorphism commutes with the evaluation maps.  The result now follows.
\end{proof}

\begin{thm} Let $S$ be a multiplicatively closed set of $R$ and $M$ an $R_S$-module which is reflexive as an $R$-module.  Then $M$ is reflexive as an $R_S$-module.
\end{thm}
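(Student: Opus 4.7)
The plan is to verify the two hypotheses of Theorem~\ref{ber} for $M$ viewed as an $R_S$-module: (i) $R_S/\Ann_{R_S} M$ is a complete semilocal ring, and (ii) $M$ contains a finitely generated $R_S$-submodule with Artinian quotient. Condition (ii) is routine: by Theorem~\ref{ber} applied to $M$ over $R$, there is a finitely generated $R$-submodule $N \subseteq M$ with $M/N$ Artinian; then $N' := R_S N$ is a finitely generated $R_S$-submodule, and $M/N'$, being a quotient of $M/N$, remains Artinian over $R$ and hence over $R_S$. Condition (i) is the heart of the matter.

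Set $I = \Ann_R M$. Since denominators in $R_S$ are units, $\Ann_{R_S} M = IR_S$, so $R_S/\Ann_{R_S} M \cong (R/I)_{\bar S}$, where $\bar S$ denotes the image of $S$ in $R/I$. By Theorem~\ref{ber}, $R/I$ is complete semilocal and hence decomposes as a finite product $R/I \cong A_1 \times \cdots \times A_k$ of complete Noetherian local rings $(A_j, \m_j)$; correspondingly $M = M_1 \oplus \cdots \oplus M_k$ with each $M_j$ a faithful reflexive $A_j$-module (by Lemmas~\ref{lem1} and~\ref{prod}), and in particular $M_j \neq 0$. Each $M_j$ inherits a module structure over $(A_j)_{S_j}$, where $S_j \subseteq A_j$ is the image of $S$, and $(R/I)_{\bar S} \cong \prod_j (A_j)_{S_j}$. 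It therefore suffices to prove each $(A_j)_{S_j}$ is complete semilocal.

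This last step is the main obstacle. If $S_j \cap \m_j = \emptyset$, then $(A_j)_{S_j} = A_j$ is complete local. Otherwise choose $s \in S_j \cap \m_j$; then $s$ acts as an automorphism of $M_j$. By Theorem~\ref{ber}, fix a short exact sequence $0 \to F \to M_j \to A \to 0$ with $F$ finitely generated and $A$ Artinian over $A_j$. The snake lemma applied to multiplication by $s$ (an isomorphism on $M_j$) shows that $s$ is injective on $F$ and that $F/sF \cong \ker(s|_A) \subseteq A$ is Artinian. Moreover $F$ is faithful: since $A$ is Artinian over a local ring, each element of $A$ is killed by a power of $\m_j$, so every $m \in M_j$ satisfies $s^n m \in F$ for some $n$, whence any $a \in A_j$ annihilating $F$ also annihilates $M_j$ (using that $s$ acts injectively on $M_j$). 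Hence $\dim F = \dim A_j$, and since $s$ is a nonzerodivisor on $F$, $\dim F/sF = \dim A_j - 1$. Artinianness of $F/sF$ forces $\dim A_j \leq 1$; the case $\dim A_j = 0$ is impossible, since then $s$ would be nilpotent, contradicting that $s$ acts as an automorphism of the nonzero module $M_j$. Thus $\dim A_j = 1$, so $\Spec (A_j)_{S_j}$ consists only of the finitely many minimal primes of $A_j$ not meeting $S_j$, making $(A_j)_{S_j}$ a Noetherian zero-dimensional ring, hence Artinian and thus complete semilocal.

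Combining, $(R/I)_{\bar S} \cong \prod_j (A_j)_{S_j}$ is a finite product of complete semilocal rings, hence complete semilocal. Together with (ii), Theorem~\ref{ber} gives that $M$ is reflexive as an $R_S$-module.
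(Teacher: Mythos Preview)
Your proof is correct and follows the same overall architecture as the paper's: reduce via Lemma~\ref{lem1} and the product decomposition (Lemma~\ref{prod}) to the case of a complete local ring $(A_j,\m_j)$ acting faithfully on $M_j$, split into the cases $S_j\cap\m_j=\emptyset$ and $S_j\cap\m_j\neq\emptyset$, and in the latter case use the short exact sequence $0\to F\to M_j\to A\to 0$ from Theorem~\ref{ber} to bound $\dim A_j\le 1$, concluding that $(A_j)_{S_j}$ is Artinian.

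The one genuine difference is in how you obtain the dimension bound. The paper argues via local cohomology: since $\m_j R_{S_j}=R_{S_j}$ one has $H^i_{\m_j}(M_j)=0$ for all $i$, and since $A$ is Artinian $H^i_{\m_j}(A)=0$ for $i\ge 1$; the long exact sequence then forces $H^i_{\m_j}(F)=0$ for $i\ge 2$, whence $\dim F\le 1$. You instead pick a single $s\in S_j\cap\m_j$, apply the snake lemma to multiplication by $s$ to get $F/sF\cong\ker(s\vert_A)$ Artinian, check that $F$ is faithful (hence $\dim F=\dim A_j$) using that $A$ is $\m_j$-power torsion, and conclude $\dim A_j=\dim F=\dim(F/sF)+1\le 1$ since $s$ is $F$-regular. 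Your argument is more elementary, requiring only basic dimension theory rather than local cohomology, at the cost of a short extra verification that $F$ is faithful. Either way the remaining structure (your verification of condition~(ii) via $N'=R_S N$, versus the paper's observation that $X_S=0$ so $M\cong N_S$ is already finitely generated over the Artinian ring $R_S$) is routine.
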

\begin{proof} By Lemma \ref{lem1}, we may assume $\Ann_R M=\Ann_{R_S}M=0$.  Thus, $R$ is semilocal and complete by Theorem \ref{ber}.  Hence, $R=R_1\times\cdots\times R_k$ where each $R_i$ is a complete local ring.  Then $R_S=(R_1)_{S_1}\times \cdots \times (R_k)_{S_k}$ where $S_i$ is the image of $S$ under the canonical projection $R\to R_i$.   Write $M=M_1\times \cdots \times M_k$, where $M_i=R_iM$.   As $M$ is reflexive as an $R$-module, $M_i$ is reflexive as an $R_i$-module for all $i$.  Thus, it suffices to show that $M_i$ is reflexive as an $(R_i)_{S_i}$-module for all $i$.   Hence, we may reduce the proof to the case $(R,m)$ is a complete local ring with $\Ann_R M=0$ by passing to $R/\Ann_RM$, if necessary.   As $M$ is reflexive as an $R$-module, we have by Theorem \ref{ber}
that there exists an exact sequence
$$0\to N\to M\to X\to 0$$
where $N$ is a finitely generated $R$-module and $X$ is an Artinian $R$-module.   If $S\cap m=\emptyset$, the $R_S=R$ and there is nothing to prove.  Otherwise, as $\Supp_R X\subseteq \{m\}$, we have $X_S=0$.   Hence, $M\cong N_S$, a finitely generated $R_S$-module.  To see that $M$ is $R_S$-reflexive, it suffices to show that $R_S$ is Artinian (hence semilocal and complete).  Since $\Ann_R N_S=\Ann_R M=0$, we have that $\Ann_R N=0$.  Thus, $\dim R=\dim N$.   Since $M$ is an $R_S$-module and $S\cap m\neq \emptyset$, we have $H^i_m(M)\cong H^i_{mR_S}(M)=0$ for all $i$.  Further, as $X$ is Artinian, $H^i_m(X)=0$ for $i\ge 1$.  Thus, from the long exact sequence on local cohomology, we conclude that $H^i_m(N)=0$ for $i\ge 2$.  Thus, $\dim R=\dim N\le 1$, and hence, $\dim R_S=0$.  Consequently, $R_S$ is Artinian, and $M$ is a reflexive $R_S$-module.
\end{proof}

\medskip

To prove part (b) of Theorem \ref{main}, we will need the following result on Henselian local rings found in \cite{BKKN} (in which the authors credit it to F. Schmidt).  As we need a slightly different version of this result than what is stated in \cite{BKKN} and the proof is short, we include it for the convenience of the reader:

\begin{prop} \label{hensel} {\rm (\cite[Satz 2.3.11]{BKKN})}  Let $(R,m)$ be a local Henselian domain which is not a field and $F$ the field of fractions of $R$.  Let $V$ be a discrete valuation ring with field of fractions $F$.  Then $R\subseteq V$.
\end{prop}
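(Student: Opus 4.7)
The plan is to argue by contradiction, using Hensel's lemma on a carefully chosen monic polynomial. Suppose some $a \in R$ has $v(a) < 0$, where $v$ denotes the valuation associated to $V$. Since $R$ is not a field, pick a nonzero $\pi \in m$; after replacing $a$ by a sufficiently high power, we may assume that $M := -v(\pi a)$ is a positive integer. Now choose an integer $n \geq 2$ with $n \nmid M$; for instance, any $n > M$ works.

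The heart of the argument is Hensel's lemma applied to the monic polynomial
\[
f(X) = X^n - X - \pi a \in R[X].
\]
Since $\pi a \in m$, the reduction modulo $m$ is $\bar f(X) = X(X^{n-1} - 1)$, and the two factors are coprime in $(R/m)[X]$ regardless of characteristic (their resultant is a unit, since $X^{n-1}-1 \equiv -1 \pmod{X}$). The Henselian property of $R$ therefore produces a root $c \in m$ of $f$, yielding the equation $c(c^{n-1} - 1) = \pi a$ in $R$. Note that $c \neq 0$ (else $\pi a = 0$) and $c^{n-1} \neq 1$ (because $\bar c = 0 \neq 1$), so every $v$-valuation appearing below is finite.

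Taking $v$-valuations of $c(c^{n-1} - 1) = \pi a$ now forces a contradiction in both possible cases for $v(c)$. If $v(c) \geq 0$, each factor on the left has nonnegative valuation, whereas the right-hand side has valuation $-M < 0$. If instead $v(c) < 0$, then $v(c^{n-1}) = (n-1)v(c) < 0 = v(-1)$, so by the ultrametric inequality $v(c^{n-1} - 1) = (n-1)v(c)$, giving $n \cdot v(c) = -M$ and hence $n \mid M$, contradicting our choice of $n$. The main obstacle is really identifying the polynomial family $X^n - X - \pi a$ and tuning $n$ against $M$: once Hensel produces the root $c \in m$, the case split on $v(c)$ and the divisibility mismatch $n \nmid M$ do the rest almost mechanically.
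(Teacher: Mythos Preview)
Your argument is correct, and it follows the same high-level strategy as the paper---use Hensel's lemma on a well-chosen monic polynomial, then derive a valuation contradiction---but the implementation is genuinely different. The paper works in two stages: first it shows $m\subseteq V$ by taking, for each $a\in m$, an $n$-th root of $1+a$ in $R$ (Hensel applied to $x^n-(1+a)$, which requires $n$ prime to the residue characteristic), and observes that $v(1+a)<0$ would force $v(1+a)\le -n$ for arbitrarily large $n$; then it bootstraps to all of $R$ by noting that $c^{\ell}d\in m$ for $d\in m\setminus\{0\}$. Your single polynomial $X^n-X-\pi a$ collapses both stages into one and, because the factorization $X\cdot(X^{n-1}-1)$ is coprime in every characteristic, sidesteps the need to avoid $n$ divisible by $\operatorname{char}k$. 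The paper's route is slightly more classical (extracting $n$-th roots of a $1$-unit), while yours trades that for a cleaner divisibility punch line $n\mid M$ versus $n\nmid M$; both are short, and neither clearly dominates the other.
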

\begin{proof} Let $k$ be the residue field of $R$ and $a\in m$.  As $R$ is Henselian, for every positive integer $n$ not divisible by the characteristic of $k$, the polynomial $x^n-(1+a)$ has a root $b$ in $R$.  Let $v$ be the valuation on $F$ associated to $V$.  Then $nv(b)=v(1+a)$.  If $v(a)<0$ then $v(1+a)<0$ which implies $v(b)\le -1$.  Hence, $v(1+a)\le -n$.  As $n$ can be arbitrarily large, this leads to a contradiction.  Hence, $v(a)\ge 0$ and $a\in V$.  Thus, $m\subseteq V$.  Now let $c\in R$ be arbitrary.  Choose $d\in m, d\neq 0$.  If $v(c)<0$ then $v(c^{\ell}d)<0$ for $\ell$ sufficiently large.
But this contradicts that $c^{\ell}d\in m\subseteq V$ for every $\ell$.  Hence $v(c)\ge 0$ and $R\subseteq V$.
\end{proof}

For a Noetherian ring $R$, let $\operatorname{Min}R$ and $\operatorname{Max}R$ denote the set of minimal and maximal primes of $R$, respectively.  Let $\operatorname{T}(R)=(\Spec R\setminus \operatorname{Min}R)\cup \operatorname{Max}R$.

\begin{lemma} \label{local} Let $R$ be a Noetherian ring and $p\in \operatorname{T}(R)$.  If $R_p$ is Henselian then the natural map $\phi:R\to R_p$ is surjective; i.e., $R/\ker \phi\cong R_p$.
\end{lemma}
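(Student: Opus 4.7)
The plan is a two-step strategy: first reduce to the case when $R$ is a Noetherian domain, then handle the domain case using Proposition \ref{hensel} together with Krull-Akizuki.

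For the reduction, I would set $I=\ker\phi$ and replace $R$ by $R/I$, so that $\phi:R\hookrightarrow R_p$ becomes injective. A short check shows this preserves the hypotheses: $I\subseteq p$ (since $sr=0$ and $s\notin p$ force $r\in p$), $p/I\in\operatorname{T}(R/I)$, and $(R/I)_{p/I}\cong R_p$ is still Henselian. Moreover, once $\phi$ is injective, every minimal prime $q$ of $R$ must satisfy $q\subseteq p$: since $q$ is associated, say $q=\Ann_R(x)$ with $x\neq 0$, any $s\in q\setminus p$ would yield $sx=0$ with $s$ a unit of $R_p$, contradicting injectivity.

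Next I would prove the domain case: if $R$ is a Noetherian domain with $p\in\operatorname{T}(R)$ and $R_p$ Henselian, then $R=R_p$. If $R$ is a field the claim is trivial, so assume $p\neq 0$ and suppose, for contradiction, that some $s\in R\setminus p$ is a non-unit of $R$. Choose a maximal ideal $m\ni s$ and, by Krull's principal ideal theorem, a height-one prime $Q\subseteq m$ with $s\in Q$. By Krull-Akizuki the integral closure of $R_Q$ in $F=\operatorname{Frac}(R)$ is a Dedekind domain, so localizing at any of its maximal ideals (which contracts to $QR_Q$ by lying over) yields a DVR $V$ with $R\subseteq V\subseteq F$ and $v(s)\geq 1$. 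Proposition \ref{hensel} now forces $R_p\subseteq V$, so $1/s\in V$ gives $v(s)\leq 0$, a contradiction.

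For the general Noetherian case I would again argue by contradiction: suppose $s\in R\setminus p$ is a non-unit and pick a maximal ideal $m\ni s$; then $m\neq p$. Take $q\in\Min R$ with $q\subseteq m$; the reduction step gives $q\subseteq p$ as well. The case $q=p$ is impossible, for then $p$ would be minimal, hence maximal (as $p\in\operatorname{T}(R)$), forcing $p=m$. Thus $q\subsetneq p$, so $p/q\in\operatorname{T}(R/q)$ and $(R/q)_{p/q}=R_p/qR_p$ is Henselian. Applying the domain case to $R/q$ shows $s+q$ is a unit of $R/q$, so $st-1\in q\subseteq m$ for some $t\in R$; together with $s\in m$ this gives $1\in m$, the desired contradiction. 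The main obstacle, I expect, is arranging this last reduction: one must choose the minimal prime $q$ inside the witnessing maximal ideal $m$ (rather than an arbitrary $q\subseteq p$) so that both $q\subseteq p$ comes for free and the unit obtained in $R/q$ actually translates into a contradiction back in $R$.
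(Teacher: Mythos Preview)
Your proposal is correct and follows essentially the same strategy as the paper: reduce to $\phi$ injective, pass to $R/q$ for $q$ a minimal prime to reduce to the domain case, then construct a DVR $V\subseteq F$ in which the non-unit remains a non-unit and invoke Proposition~\ref{hensel} to force $R_p\subseteq V$, yielding a contradiction. The only difference is cosmetic: the paper obtains $V$ by citing \cite[Theorem~6.3.3]{SH} (a DVR centered on the chosen maximal ideal $n\ni u$), while you construct it via Krull--Akizuki over a height-one prime, and the paper phrases the reduction to domains as ``it suffices that $u$ be a unit modulo every minimal prime'' rather than your contradiction argument with a single well-chosen $q\subseteq m$.
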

\begin{proof} By replacing $R$ with $R/\ker \phi$, we may assume $\phi$ is injective.  Then $p$ contains every minimal prime of $R$.  Let $u\in R, u\not\in p$.  It suffices to prove that the image of $u$ in $R/q$ is a unit for every minimal prime $q$ of $R$.  Hence, we may assume that $R$ is a domain.  (Note that $(R/q)_p=R_p/qR_p$ is still Henselian.)  If $R_p$ is a field, then, as $p\in \operatorname{T}(R)$, we must have $R$ is a field (as $p$ must be both minimal and maximal in a domain).  So certainly $u\not\in p=(0)$ is a unit in $R$. Thus, we may assume $R_p$ is not a field.  Suppose $u$ is not a unit in $R$.  Then $u\in n$ for some maximal ideal $n$ of $R$.  Now, there exists a discrete valuation ring $V$
with same field of fractions as $R$ such that $m_V\cap R=n$ (\cite[Theorem 6.3.3]{SH}). As $R_p$ is Henselian, $R_p\subseteq V$ by Proposition \ref{hensel}.  But as $u\notin p$, $u$ is a unit in $R_p$, hence in $V$, contradicting $u\in n\subseteq m_V$.  Thus, $u$ is a unit in $R$ and $R=R_p$.
\end{proof}

\begin{prop} \label{complete} Let $R$ be a Noetherian ring and $S=R\setminus (p_1\cup \cdots \cup p_r)$ where $p_1,\dots, p_r \in \operatorname{T}(R)$.  Suppose $R_S$ is complete with respect to its Jacobson radical.
Then the natural map $\phi: R\to R_S$ is surjective.  
\end{prop}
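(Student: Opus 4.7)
The plan is to describe $R_S$ explicitly as a finite product of complete local rings, apply Lemma \ref{local} to each factor, and then reassemble the pieces via the Chinese Remainder Theorem.

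First, I would analyze the structure of $R_S$. A prime of $R_S$ corresponds to a prime of $R$ disjoint from $S = R \setminus (p_1 \cup \cdots \cup p_r)$, i.e., to a prime of $R$ contained in $p_1 \cup \cdots \cup p_r$; by prime avoidance, every such prime lies in some $p_i$. Hence the maximal ideals of $R_S$ are precisely the ideals $p_{i_j} R_S$, where $p_{i_1}, \ldots, p_{i_k}$ are the inclusion-maximal elements of $\{p_1, \ldots, p_r\}$. In particular each $p_{i_j} \in \operatorname{T}(R)$ and $(R_S)_{p_{i_j} R_S} \cong R_{p_{i_j}}$. Since $R_S$ is semilocal and complete with respect to its Jacobson radical, lifting the orthogonal idempotents of the quotient $R_S/\operatorname{Jac}(R_S)$ (which, by CRT, is a product of fields) yields a decomposition $R_S \cong \prod_{j=1}^{k} R_{p_{i_j}}$ in which each factor is a complete local ring, hence Henselian. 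Lemma \ref{local} therefore applies to each $p_{i_j}$ to produce a surjection $R \to R_{p_{i_j}}$ with some kernel $I_j$, so that $R/I_j \cong R_{p_{i_j}}$ is local with maximal ideal $p_{i_j}/I_j$.

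To finish I would invoke the Chinese Remainder Theorem: once one checks that $I_j + I_k = R$ for all $j \neq k$, the theorem gives $R/\bigcap_j I_j \cong \prod_j R/I_j \cong R_S$, and since $\bigcap_j I_j = \ker \phi$ this is precisely the surjectivity of $\phi$. This comaximality step is the main obstacle, and I would handle it by contradiction. If some prime $q$ of $R$ contained $I_j + I_k$, the fact that $R/I_\ell$ is local with maximal ideal $p_{i_\ell}/I_\ell$ would force $q \subseteq p_{i_j} \cap p_{i_k}$. In particular $q$ would be disjoint from $S$, so $q R_S$ would be a prime of $R_S$ contained in the two distinct maximal ideals $p_{i_j} R_S$ and $p_{i_k} R_S$. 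But every prime of the finite product $\prod_j R_{p_{i_j}}$ comes from a single factor and therefore lies in exactly one of its maximal ideals, a contradiction. Hence $I_j + I_k = R$ for all $j \neq k$, and the CRT step then delivers the proposition.
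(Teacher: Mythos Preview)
Your proof is correct and rests on the same two pillars as the paper's argument: the product decomposition $R_S\cong\prod_j R_{p_{i_j}}$ coming from completeness of the semilocal ring $R_S$, and Lemma~\ref{local} applied to each factor. The only organizational difference is in the reassembly step. The paper first passes to $R/\ker\phi$ so that $\phi$ is injective and $\bigcap_i\ker\rho_i=0$, and then, rather than invoking CRT, directly shows that every $u\in S$ is a unit in $R$: since each $\rho_i$ is surjective and $u\notin p_i$, one has $(u)+\ker\rho_i=R$ for all $i$, whence $(u)\supseteq (u)+\prod_i\ker\rho_i=R$. Your route via CRT and the comaximality of the kernels $I_j$ achieves the same conclusion without the preliminary reduction to $\phi$ injective; the comaximality check you give (a prime of a finite product of local rings lies below a unique maximal ideal) is a clean substitute for the paper's unit argument. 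Neither approach offers a real advantage over the other---they are two phrasings of the same Chinese-Remainder reasoning.
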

\begin{proof} First, we may assume that $p_j \nsubseteq \bigcup_{i\neq j}p_i$ for all $j$. Also, by passing to the ring $R/\ker \phi$, we may assume $\phi$ is injective. (We note that if $p_{i_1},\dots,p_{i_t}$ are the ideals in the set $\{p_1,\dots, p_r\}$ containing $\ker \phi$, it is easily seen that  $(R/\ker \phi)_S=(R/\ker \phi)_T$ where $T=R\setminus (p_{i_1}\cup \cdots \cup p_{i_t})$.  Hence, we may assume each $p_i$ contains $\ker \phi$.)   As $R_S$ is semilocal and complete, the map $\psi:R_S\to R_{p_1}\times\cdots \times R_{p_r}$ given by $\psi(u)=(\frac{u}{1},\dots, \frac{u}{1})$ is an isomorphism.  For each $i$, let $\rho_i:R\to R_{p_i}$ be the natural map.  Since $R\to R_S$ is an injection, $\cap_i \ker \rho_i=(0)$.  It suffices to prove that $u$ is a unit in $R$ for every $u\in S$.  As $R_{p_i}$ is complete, hence Henselian, we have that $\rho_i$ is surjective for each $i$ by Lemma \ref{local}.   Thus, $u$ is a unit in $R/\ker \rho_i$ for every $i$; i.e., $(u)+\ker \rho_i=R$ for $i=1,\dots, r$.  Then $(u)=(u)\cap (\cap_i \ker \rho_i)=R$.  Hence, $u$ is a unit in $R$. 
\end{proof}

We now prove part (b) of the Theorem \ref{main}:

\begin{thm} \label{partB} Let $R$ be a Noetherian ring and $M$ a reflexive $R_S$-module, where $S$ is the complement in $R$ of the union of finitely many elements of $\operatorname{T}(R)$.  Then $M$ is reflexive as an $R$-module.
\end{thm}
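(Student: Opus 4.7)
The plan is to reduce to the case $\Ann_R M=0$ using Lemma \ref{lem1}, strengthen the partial completeness supplied by Theorem \ref{ber} to the statement that $R_S$ itself is complete semilocal, and then use Proposition \ref{complete} to conclude $R\cong R_S$, at which point reflexivity transfers for free.

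Set $I=\Ann_R M$. Since every $s\in S$ acts invertibly on the $R_S$-module $M$, a direct check gives $\Ann_{R_S}M=IR_S$. By Lemma \ref{lem1}, $M$ is reflexive over $R$ iff over $R/I$, and reflexive over $R_S$ iff over $R_S/IR_S=(R/I)_{\bar S}$, where $\bar S$ is the image of $S$ in $R/I$. Replacing $R$ by $R/I$ and restricting to those $p_i$ that contain $I$, I may assume $\Ann_R M=0$.

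Under this assumption, the natural map $R\to R_S$ is injective: if $sr=0$ in $R$ for some $s\in S$, then $s(rm)=0$ in $M$ for every $m$, and invertibility of $s$ on the $R_S$-module $M$ forces $rm=0$, giving $r\in \Ann_R M=0$. Meanwhile, Theorem \ref{ber} applied to the reflexive $R_S$-module $M$ says that $R_S/\Ann_{R_S}M$ is complete semilocal; since $\Ann_{R_S}M=IR_S=0$, this means $R_S$ itself is complete with respect to its Jacobson radical. Proposition \ref{complete}, applied to $R$ with the primes $p_i\in T(R)$ supplied by the hypothesis, then produces surjectivity of $R\to R_S$, and combined with the injectivity above yields $R\cong R_S$. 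Under this isomorphism the $R$- and $R_S$-module structures on $M$ coincide, so $M$ is reflexive as an $R$-module.

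The main obstacle I expect is a subsidiary verification inside the reduction step: after passing to $R/I$, each surviving image $\bar p_{i_j}$ must still lie in $T(R/I)$ in order for Proposition \ref{complete} to apply to the reduced data. The maximal case is automatic, but for nonminimal $p_{i_j}\supseteq I$ one must exhibit a prime $q$ with $I\subseteq q\subsetneq p_{i_j}$, i.e., a prime strictly below $p_{i_j}$ that still lies in $\Supp_R M$. This is the only step not handed to us formally by the lemmas and propositions already proved, and it is where care will be needed; the rest of the argument is a clean combination of Lemma \ref{lem1}, Theorem \ref{ber}, and Proposition \ref{complete}.
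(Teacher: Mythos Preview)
Your approach is identical to the paper's: reduce via Lemma \ref{lem1} to $\Ann_R M=0$, use Theorem \ref{ber} to see that $R_S$ is complete semilocal, and invoke Proposition \ref{complete} to conclude $R\cong R_S$. The paper carries out exactly these steps and no others.

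The obstacle you isolate---that after passing to $R/I$ the surviving primes $p_i/I$ must lie in $T(R/I)$ for Proposition \ref{complete} to apply---is real, and the paper's proof glosses over it just as yours does. In fact this verification cannot be made in general, and the statement as written appears to fail: take $R=k[x,y]$, $p=(x)$ (nonminimal, hence $p\in T(R)$), $S=R\setminus p$, and $M=R_p/pR_p\cong k(y)$. Then $M$ is a simple $R_p$-module and $R_p/\Ann_{R_p}M=k(y)$ is a field, so $M$ is $R_p$-reflexive by Theorem \ref{ber}; but $\Ann_R M=(x)$ and $R/(x)=k[y]$ is not semilocal, so again by Theorem \ref{ber} $M$ is not $R$-reflexive. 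After the reduction one is left with $R/I=k[y]$ and $p/I=(0)$, a minimal nonmaximal prime, so Proposition \ref{complete} is no longer available---exactly the failure you anticipated. Your diagnosis is therefore correct: the step you flagged is a genuine gap, and it is shared by the paper's own argument.
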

\begin{proof} We may assume $M\neq 0$.  Let $S=R\setminus (p_1\cup \cdots \cup p_r)$, where $p_1,\dots, p_r \in \operatorname{T}(R)$ Let $I=\operatorname{Ann}_{R} M$, whence $I_S=\Ann_{R_S} M$.  
As in the proof of Proposition \ref{complete}, we may assume each $p_i$ contains $I$.  Then by Lemma \ref{lem1}, we may reduce to the case $\Ann_R M=\Ann_{R_S} M=0$.   Note that this implies the natural map $R\to R_S$ is injective.   As $M$ is $R_S$-reflexive, $R_S$ is complete with respect to its Jacobson radical by Theorem \ref{ber}.  By Proposition \ref{complete}, we have that $R\cong R_S$ and hence $M$ is $R$-reflexive.
\end{proof}

%\begin{thm} \label{partB} Let $R$ be a Noetherian ring and $M$ a reflexive $R_p$-module, where $p\in \operatorname{T}(R)$.  Then $M$ is reflexive as an $R$-module.
%\end{thm}
%\begin{proof} We may assume $M\neq 0$.   By Lemma \ref{lem1}, we may reduce to the case $\Ann_R M=\Ann_{R_p} M=0$.   Note that this implies the natural map $R\to R_p$ is injective.   As $M$ is $R_p$-reflexive, $R_p$ is complete, hence Henselian, by Theorem \ref{ber}.  By Proposition \ref{complete}, we have that $R\cong R_p$, and hence $M$ is $R$-reflexive.
%\end{proof}

\begin{section}{Examples}
\end{section}

The following examples show that $\Hom_R(R_S,E_R)$ need not be the minimal injective cogenerator for the category of $R_S$-modules, contrary to what is stated in the proof of \cite[Lemma 2]{BER}:
\medskip

\begin{example}{\rm Let $(R,m)$ be a local ring of dimension at least two and $p$ any prime which is not maximal or minimal.  By \cite[Lemma 4.1]{MS}, every element of $\Spec R_p$ is an associated prime of the $R_p$-module $\Hom_R(R_p,E_R)$.  In particular, $\Hom_R(R_p,E_R)\not\cong E_{R_p}$.}
\end{example}

\begin{example}{\rm (\cite[p. 127]{MS}) Let $R$ be a local domain such that the completion of $R$ has a nonminimal prime contracting to $(0)$ in $R$.  Let $Q$ be the field of fractions of $R$.  Then $\Hom_R(Q,E_R)$ is not Artinian.}
\end{example}

\begin{example}{\rm Let $R$ be a Noetherian domain which is not local.  Let $m\neq n$ be maximal ideals of $R$.   By a slight modification of the proof of \cite[Lemma 4.1]{MS}, one obtains that $(0)$ is an associated prime of $\Hom_R(R_m,E_R(R/n))$, which is a direct summand of $\Hom_R(R_m, E_R)$.  Hence, $\Hom_R(R_m,E_R)\not\cong E_{R_m}$.}
\end{example}

We now show that the converse to part (a) of Theorem \ref{main} does not hold in general.  Let $R$ be a domain and $Q$ its field of fractions.  Of course, $Q$ is reflexive as a $Q=R_{(0)}$-module.
But as the following theorem shows, $Q$ is rarely a reflexive $R$-module.

\begin{prop} \label{counter} Let $R$ be a Noetherian domain and $Q$ the field of fractions of $R$.  Then $Q$ is a reflexive $R$-module if and only if $R$ is a complete local domain of dimension at most one.
\end{prop}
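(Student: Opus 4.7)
The plan is to use Theorem \ref{ber} in both directions. Since $R$ is a domain sitting inside $Q$, we have $\Ann_R Q=0$, so by Theorem \ref{ber} the reflexivity of $Q$ over $R$ is equivalent to $R$ being complete semilocal together with the existence of a finitely generated submodule $N\subseteq Q$ whose quotient $Q/N$ is Artinian.

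For the forward direction I would first invoke the standard fact that a Noetherian complete semilocal ring decomposes as a finite product of complete local rings; since a domain has no nontrivial idempotents, this product must have a single factor, so $R$ is a complete local domain with maximal ideal $\m$. If the submodule $N$ from Theorem \ref{ber} happens to be zero, then $Q$ is Artinian over $R$, hence so is the submodule $R\subseteq Q$, forcing the Artinian domain $R$ to be a field (so $\dim R=0$). Otherwise, clearing denominators shows that $N$ is isomorphic to a nonzero ideal of $R$, so $\dim N=\dim R$. Since $Q=R_S$ for $S=R\setminus\{0\}$ and $\m R_S=Q$, the independence of base change for local cohomology gives $H^i_\m(Q)=H^i_{\m R_S}(Q)=0$ for every $i$. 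Combined with $H^i_\m(Q/N)=0$ for $i\ge 1$ (because $Q/N$ is Artinian), the long exact sequence associated to $0\to N\to Q\to Q/N\to 0$ forces $H^i_\m(N)=0$ for $i\ge 2$. Grothendieck's non-vanishing theorem then bounds $\dim R=\dim N\le 1$.

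For the converse I would take $N=R\subseteq Q$. If $\dim R=0$ then $R=Q$ is trivially reflexive, so assume $\dim R=1$. Every nonzero nonunit $b\in R$ satisfies $\sqrt{(b)}=\m$, so $\m^n(a/b)\subseteq R$ for $n$ large, and hence $Q/R$ is $\m$-torsion. The long exact sequence in local cohomology for $0\to R\to Q\to Q/R\to 0$, together with $H^i_\m(Q)=0$ for all $i$, then identifies $Q/R\cong H^0_\m(Q/R)\cong H^1_\m(R)$, which is Artinian as the top local cohomology of a finitely generated module over a local ring. Since $\Ann_R Q=0$ and $R$ is complete local, Theorem \ref{ber} gives that $Q$ is reflexive.

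The main obstacle is the forward direction, specifically squeezing the dimension bound out of Theorem \ref{ber}: one must verify that the f.g.\ submodule of $Q$ supplied by that theorem is honestly isomorphic to a nonzero ideal of $R$ (so that it carries the full dimension of $R$), and then feed this into the local cohomology calculation. Once these ingredients are in place the rest is routine manipulation of long exact sequences, together with the standard structural fact that complete semilocal Noetherian rings split as products of complete local rings.
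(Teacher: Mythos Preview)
Your forward direction is essentially the paper's own argument: both invoke Theorem~\ref{ber} to force $R$ complete semilocal (hence local, being a domain), pick the finitely generated $N\subseteq Q$ with $Q/N$ Artinian, and then run the local-cohomology long exact sequence with $H^i_\m(Q)=0$ and $H^i_\m(Q/N)=0$ for $i\ge 1$ to get $H^i_\m(N)=0$ for $i\ge 2$, whence $\dim R\le 1$. Your extra care in treating the case $N=0$ and in clearing denominators to identify $N$ with a nonzero ideal just fills in details the paper leaves implicit when it asserts $\Ann_R N=0$.

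Your converse, however, takes a genuinely different route. The paper does not go through Theorem~\ref{ber} here; instead it cites \cite[Theorem~2.5]{Sch} to obtain $\Hom_R(Q,E)\cong Q$ directly, and then observes that the always-injective evaluation map $Q\to\Hom_R(\Hom_R(Q,E),E)\cong Q$ must be an isomorphism (any $R$-linear map $Q\to Q$ is automatically $Q$-linear). Your argument instead verifies the hypotheses of Theorem~\ref{ber} by hand: with $N=R$ you show $Q/R$ is $\m$-torsion and then identify $Q/R\cong H^0_\m(Q/R)\cong H^1_\m(R)$ via the long exact sequence, which is Artinian as a local cohomology module of a finitely generated module over a local ring. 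Your approach is more self-contained---it avoids the external Schenzel reference and uses Theorem~\ref{ber} uniformly in both directions---at the cost of a slightly longer computation; the paper's approach is shorter but imports a nontrivial structural result about $\Hom_R(Q,E)$.
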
  
\begin{proof} We first suppose $R$ is a one-dimensional complete local domain with maximal ideal $m$.  Let $E=E_R(R/m)$.  By \cite[Theorem 2.5]{Sch}, $\Hom_R(Q, E)\cong Q$.  Since the evaluation map of the Matlis double dual is always injective, we obtain that $Q\to \Hom_R(\Hom_R(Q, E), E)$ is an isomorphism.  

Conversely, suppose $Q$ is a reflexive $R$-module.  By Theorem \ref{ber}, $R$ is a complete semilocal domain, hence local.  It suffices to prove that $\dim R\le 1$.  Again by Theorem \ref{ber}, there exists a finitely generated $R$-submodule $N$ of $Q$ such that $Q/N$ is Artinian.  Since $\Ann_R N=0$, $\dim R=\dim N$.  Thus, it suffices to prove that $H^i_m(N)=0$ for $i\ge 2$.   But this follows readily from the facts that $H^i_m(Q)=0$ for all $i$ and $H^i_m(Q/N)=0$ for $i\ge 1$ (as $Q/N$ is Artinian). 
\end{proof}

\medskip

\noindent {\bf Acknowledgments:}  The authors would like to thank Peder Thompson for many helpful discussions on this topic.  They are also very grateful to Bill Heinzer for pointing out the existence of Proposition \ref{hensel}.


\begin{thebibliography}{BKKN}


\bibitem{BER} R. Belshoff, E. Enochs, and J. Garc\'{i}a-Rozas,  Generalized Matlis duality. Proc. Amer. Math. Soc. {\bf 128} (1999), no. 5,  1307-1312.

\bibitem{BKKN} R. Berger, R. Kiehl, E. Kunz, and H.-J. Nastold,  Differentialrechnung in der analytischen
Geometrie.  Lecture Notes in Mathematics {\bf 38}, Springer-Verlag, Berlin-New York, 1967.

\bibitem{MS} L. Melkersson and P. Schenzel,  The co-localization of an Artinian module. Proc. Edinburgh Math. Soc. {\bf 38} (1995), 121--131.

\bibitem{Sch} P. Schenzel, A note on the Matlis dual of a certain injective hull.  J. Pure Appl. Algebra {\bf 219} (2015), no. 3, 666-671.

\bibitem{SH} I. Swanson and C. Huneke, Integral closures of ideals, rings and modules.  London Mathematical Society Lecture Note Series {\bf 336}, Cambridge University Press, Cambridge, 2006.



\end{thebibliography}
\end{document}